\theoremstyle{plain}
\newtheorem{theorem}{Theorem}[section]
\newtheorem{lemma}[theorem]{Lemma}
\theoremstyle{definition}
\theoremstyle{remark}
\newtheorem{remark}[theorem]{Remark}
\newcommand{\F}{\mathcal{F}}
\newcommand{\I}{\mathcal{I}}
\newcommand{\M}{\mathcal{M}}
\newcommand{\R}{\mathbb{R}}
\DeclareMathOperator{\conv}{\mathrm{conv}}
\DeclareMathOperator{\supp}{\mathrm{supp}}
\newcommand{\T}{\mathcal{T}}
\begin{document}

\title[Colorful coverings of polytopes]{Colorful coverings of polytopes and piercing numbers of colorful $d$-intervals}

\author{Florian Frick}\thanks{}
%\address[FF]{Mathematical Sciences Research Institute, Berkeley, CA}
\address[FF]{Dept.\ Math., Cornell University, Ithaca, NY}
\email{ff238@cornell.edu}
\author{Shira Zerbib}
%\address[SZ]{Mathematical Sciences Research Institute, Berkeley, CA}
\address[SZ]{Dept.\ Math., University of Michigan, Ann Arbor, MI}
\email{zerbib@umich.edu}

%%%%%%%%%%%%%%%%%%%%%%%%%%%%%%%%%%%%%%%%%%%%%%%%%%%%%%%%%%%%%%%%%%%%%%%%%%%%%%%%%

\begin{abstract}
	\small
	We prove a common strengthening of B\'ar\'any's colorful Carath\'eodory theorem and the KKMS theorem.
	In fact, our main result is a colorful polytopal KKMS theorem, which extends a colorful KKMS theorem due to 
	Shih and Lee~[\emph{Math. Ann.} 296 (1993), no. 1, 35--61] as well as a polytopal KKMS theorem due to 
	Komiya~[\emph{Econ. Theory} 4 (1994), no. 3, 463--466]. The (seemingly unrelated) colorful Carath\'eodory 
	theorem is a special case as well. We apply our theorem to establish an upper bound on the piercing number
	of colorful $d$-interval hypergraphs, extending earlier results of Tardos~[\emph{Combinatorica} 15 (1995), no. 1, 123--134] 
	and Kaiser~[\emph{Discrete Comput. Geom.} 18 (1997), no. 2, 195--203].
\end{abstract}

\maketitle

\noindent {\bf MSC codes:} 55M20, 52B11, 05B40, 52A35

\bigskip

%%%%%%%%%%%%%%%%% 

\section{Introduction}
\label{sec.int}

The KKM theorem of Knaster, Kuratowski, and Mazurkiewicz~\cite{KKM} is a set covering variant of Brouwer's fixed point theorem.
It states that for any covering of the $k$-simplex $\Delta_k$ on vertex set $[k+1]$ with closed sets $A_1, \dots, A_{k+1}$ such that
the face spanned by vertices in $S$ is contained in $\bigcup_{i\in S} A_i$ for every $S \subset [k+1]$, the intersection $\bigcap_{i \in [k+1]} A_i$
is nonempty.

The KKM theorem has inspired many extensions and variants, some of which we will briefly survey in Section~\ref{sec:komiya}. Important strengthenings
include a colorful extension of the KKM theorem due to Gale~\cite{gale1984} that deals with $k+1$ possibly distinct coverings of the $k$-simplex and the 
KKMS theorem of Shapley~\cite{shapley}, where the sets in the covering are associated to faces of the $k$-simplex instead of its vertices. Further generalizations
of the KKMS theorem are a polytopal version due to Komiya~\cite{komiya} and the colorful KKMS theorem of Shih and Lee~\cite{ShihLee}.

In this note we prove a colorful polytopal KKMS theorem, extending all results above. This result is finally sufficiently general to also 
specialize to B\'ar\'any's celebrated colorful Carath\'eodory theorem~\cite{barany} from 1982, which asserts that if 
$X_1, \dots, X_{k+1}$ are subsets of $\R^k$ with $0 \in \conv X_i$ for every $i \in [k+1]$, then there exists a choice of points  
$x_1 \in X_1, \dots, x_{k+1} \in X_{k+1}$ such that $0 \in \conv\{x_1, \dots, x_{k+1}\}$.  
Carath\'eodory's classical result is the case $X_1 = X_2 = \dots = X_{k+1}$. We deduce the colorful Carath\'eodory theorem
from our main result in Section~\ref{sec5}.

For a set $\sigma \subset \R^k$ we denote by $C_\sigma$ the \emph{cone of $\sigma$}, that is, the union of all rays emanating 
from the origin that intersect~$\sigma$. Our main result is the following: 
 
\begin{theorem}
\label{thm:col-komiya}
	Let $P$ be a $k$-dimensional polytope with~${0 \in P}$. Suppose for every nonempty, proper face $\sigma$ of $P$ we are given $k+1$ points
	$y^{(1)}_\sigma, \dots, y^{(k+1)}_\sigma \in C_\sigma$ and $k+1$ closed sets $A^{(1)}_\sigma, \dots, A^{(k+1)}_\sigma \subset P$.
	If $\sigma \subset \bigcup_{\tau \subset \sigma} A^{(j)}_\tau$ for every face~$\sigma$ of~$P$ and every~${j\in [k+1]}$,
	then there exist faces $\sigma_1, \dots, \sigma_{k+1}$ of $P$ such that
	$0 \in \conv\{y_{\sigma_1}^{(1)}, \dots, y_{\sigma_{k+1}}^{(k+1)}\}$ and $\bigcap_{i=1}^{k+1} A^{(i)}_{\sigma_i}\neq \emptyset.$
\end{theorem}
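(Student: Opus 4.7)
The plan is to reduce the theorem to Komiya's (non-colorful) polytopal KKMS theorem applied to the auxiliary product polytope $Q = \Delta_k \times P$, a $2k$-dimensional polytope containing the distinguished point $(c, 0)$, where $c$ is the centroid of $\Delta_k$ and $0 \in P$. The colorful data is encoded as a covering of $Q$ by closed sets and cone-points indexed by pairs (vertex of $\Delta_k$, proper face of $P$).

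First, for each color $j \in [k+1]$ I would construct a continuous partition of unity $\{\lambda^{(j)}_\sigma\}_\sigma$ on $P$ subordinate to $\{A^{(j)}_\sigma\}_\sigma$ with the carrier-respecting property that $\lambda^{(j)}_\sigma(x) > 0$ implies $\sigma$ is contained in the minimal face of $P$ containing~$x$. This is a standard Urysohn construction combining the covering hypothesis $\sigma \subseteq \bigcup_{\tau \subseteq \sigma} A^{(j)}_\tau$ with the open star decomposition of $P$. Next, to each face $F_{j, \sigma} = \{e_j\} \times \sigma$ of $Q$ I would assign the closed set $B_{j, \sigma} = \{(t, x) \in Q : t_j = \max_i t_i \text{ and } x \in A^{(j)}_\sigma\}$ together with the cone-point $b_{j, \sigma} = (s(e_j - c), y^{(j)}_\sigma)$, where $s \geq 0$ is chosen so that $y^{(j)}_\sigma / s$ lies in $\sigma$; faces of $Q$ not of this form are assigned the empty set. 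The hypothesis of Komiya's polytopal KKMS theorem on $Q$ is then straightforward to verify: at any $(t, x) \in \tau \times \sigma$, picking $j$ with $e_j \in \tau$ and $t_j$ maximal and applying the KKM hypothesis for color~$j$ to~$\sigma$ yields some $\sigma' \subseteq \sigma$ with $(t, x) \in B_{j, \sigma'}$.

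Applying Komiya to $Q$ then produces a family $\{(j_i, \sigma_i)\}$, convex weights $\alpha_i$, and a common point $(t^*, x^*) \in \bigcap_i B_{j_i, \sigma_i}$ with $0 \in \conv\{b_{j_i, \sigma_i}\}$. Projecting this last identity onto the $\Delta_k$-factor, together with the centroid identity $c = \frac{1}{k+1}\sum_l e_l$, forces every color $l \in [k+1]$ to be represented among the $j_i$'s, while projection onto the $P$-factor yields $\sum_i \alpha_i y^{(j_i)}_{\sigma_i} = 0$. The main obstacle is the final extraction: from this convex combination one must select a single face $\sigma^*_l \in \{\sigma_i : j_i = l\}$ per color such that $0 \in \conv\{y^{(l)}_{\sigma^*_l}\}_{l=1}^{k+1}$; the intersection condition $x^* \in \bigcap_l A^{(l)}_{\sigma^*_l}$ then follows automatically. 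This selection is in the spirit of B\'ar\'any's colorful Carath\'eodory theorem, though its precise hypothesis does not apply directly; I expect that the equal-weight structure coming from the $\Delta_k$-projection, together with an extremal-representation argument or an internal application of colorful Carath\'eodory to the normalized points $y^{(l)}_\sigma / s$ lying inside~$\sigma$, will close the remaining gap.
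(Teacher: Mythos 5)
Your reduction to Komiya's theorem on $Q=\Delta_k\times P$ is sound up to exactly the point you flag: the sets $B_{j,\sigma}$ are closed, the covering condition on $Q$ is verified as you say, the cone points are legitimate (after rescaling into the faces, as in the paper's remark that $0\in\conv\{x_i\}$ iff $0\in\conv\{\alpha_i x_i\}$ for $\alpha_i>0$), and the two projections give what you state. But the final extraction is a genuine gap, not a technicality, and it cannot be closed by colorful Carath\'eodory or by the equal-weight structure. What you need is the statement: if $0=\sum_i\alpha_i y^{(j_i)}_{\sigma_i}$ is a convex combination in which every color $l\in[k+1]$ carries positive total weight, then one can select one point per color whose convex hull contains $0$. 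This is false. Take $k=2$ and color classes $Y_1=\{(1,0)\}$, $Y_2=\{(1,\tfrac{1}{10})\}$, $Y_3=\{(-1,1),(-1,-1)\}$ in $\R^2$; then $0.31\,(1,0)+0.1\,(1,\tfrac{1}{10})+0.2\,(-1,1)+0.21\,(-1,-1)=(0,0)$, so after normalizing, $0$ is a convex combination using all three colors with positive weight, yet neither rainbow triple contains the origin: the triple containing $(-1,1)$ lies in the closed upper half-plane and meets the $x$-axis only at $(1,0)$, while the functional $(x,y)\mapsto x-\tfrac{3}{2}y$ is strictly positive on the triple containing $(-1,-1)$. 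Colorful Carath\'eodory does not apply because you do not know $0\in\conv Y_l$ for each color separately --- that is exactly what fails here. And the equal-weight information from the $\Delta_k$-projection only constrains the products $\alpha_i s_i$; since the scalars $s_i$ are free positive parameters of your construction, they decouple the two projections, so that structure imposes no usable constraint on the $P$-side configuration (the counterexample is compatible with it after choosing suitable $s_i$).

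The missing selection step is in fact the entire colorful content of the theorem, and a colorless theorem applied to a product polytope appears to lose it irretrievably: Komiya on the $2k$-dimensional $Q$ returns an uncontrolled family of faces in which colors may repeat, and no a posteriori pruning recovers one face per color with $0$ in the hull. The paper avoids this by building the colors into the combinatorics before taking limits: one triangulates $P$ finely by $T$, passes to the barycentric subdivision $T'$, and labels a vertex of $T'$ that is the barycenter of an $\ell$-dimensional face of $T$ by a face $\tau$ of $P$ with $v\in A^{(\ell+1)}_\tau$, $\tau$ contained in the supporting face of $P$. Since every facet of $T'$ has exactly one vertex of each dimension $0,\dots,k$, every facet automatically uses each color exactly once, and Theorem~\ref{thm:sperner-s} (the degree argument) applied to this labeling, followed by the compactness/limiting argument, yields the rainbow faces directly. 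If you want to keep a reduction-style proof, you would need a genuinely colorful Sperner--Shapley or KKMS input (in the spirit of Shih--Lee, or of Su's colorful Sperner lemma), not the colorless Komiya theorem.
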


Our proof of this result relies on a 
topological mapping degree argument. As such, it is entirely different from B\'ar\'any's proof of the colorful Carath\'eodory theorem, and thus provides a new topological route to prove this theorem. Our argument is also less involved
than the topological proof given recently by Meunier, Mulzer, Sarrabezolles, and Stein~\cite{meunier2017} to show that algorithmically  
finding the configuration whose existence is guaranteed by the colorful Carath\'eodory theorem is in PPAD (that is, informally
speaking, it can be found by a path-following algorithm). Our method, however, involves a limiting argument and thus does not have immediate
algorithmic consequences. 
Finally, our proof of Theorem \ref{thm:col-komiya} exhibits a surprisingly simple way to prove KKMS-type results and their polytopal and colorful extensions.

As an application of Theorem~\ref{thm:col-komiya} we prove a bound on the piercing numbers of colorful $d$-interval hypergraphs.  
A {\em $d$-interval} is a union of at most $d$ disjoint closed
intervals on~$\mathbb{R}$. A $d$-interval $h$ is {\em separated} if it consists of $d$ disjoint interval components $h = h^1 \cup \dots \cup h^d$ with $h^{i+1} \subset (i, i + 1)$ for $i \in\{0, \dots, d-1\}$.  
A {\em hypergraph of (separated) $d$-intervals} is a hypergraph $H$ whose vertex set is $\mathbb{R}$ and whose edge set is a finite family of (separated) $d$-intervals.

A {\em matching} in a hypergraph $H=(V,E)$ with vertex set $V$ and
edge set $E$ is a set of disjoint edges. A {\em cover} is a subset of
$V$ intersecting all edges. The \emph{matching number} $\nu(H)$ is the maximal size of a matching, and
the \emph{covering number} (or {\em piercing number}) $\tau(H)$ is the minimal size of a
cover. 
Tardos \cite{tardos} and Kaiser \cite{kaiser} proved the following bound on the covering number in
hypergraphs of $d$-intervals:

\begin{theorem}[Tardos \cite{tardos}, Kaiser \cite{kaiser}]\label{t:kaiser}
In every hypergraph $H$ of
  $d$-intervals we have 
$\tau(H) \leq (d^2-d+1)\nu(H).$ Moreover, if $H$ is a hypergraph of separated $d$-intervals then $  \tau(H) \leq (d^2-d)\nu(H).$
  \end{theorem}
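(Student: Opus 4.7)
The plan is to deduce the bound from Theorem~\ref{thm:col-komiya}. Start with a maximum matching $M = \{M_1,\dots,M_\nu\}$ of $H$, and decompose $U := \bigcup_i M_i$ as a disjoint union of $T \le d\nu$ closed intervals $I_1,\dots,I_T \subset \R$, ordered left to right. By maximality of $M$, every edge $h \in H$ meets $U$; since $h$ is a $d$-interval, its index set $J(h) = \{j : h \cap I_j \neq \emptyset\}$ is a union of at most $d$ maximal runs of consecutive integers in $[T]$, and in the separated case this block structure is further constrained since the $d$ components of $h$ sit in prescribed unit intervals. The goal is to exploit this block structure, together with Theorem~\ref{thm:col-komiya}, to extract a small piercing set.

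I would next reduce the problem to a single matching edge: it suffices to show that for each $M_i$ one can find $d^2-d+1$ points inside $M_i$ (respectively $d^2-d$ in the separated case) that pierce every edge of $H$ meeting $M_i$. Summing over $i$ then yields $\tau(H) \le (d^2-d+1)\nu$. To construct these points, I would set $k = d^2 - d$ (respectively $k = d^2 - d - 1$) and design a $k$-dimensional polytope $P$ containing $0$ whose face lattice encodes the combinatorics of the intersection patterns of $d$-intervals with the components of $M_i$. For each nonempty proper face $\sigma$ of $P$ and each color $j \in [k+1]$, I would define closed sets $A_\sigma^{(j)} \subset P$ from the edges of $H$ meeting $M_i$, together with points $y_\sigma^{(j)} \in C_\sigma$ chosen so that the covering hypothesis $\sigma \subset \bigcup_{\tau \subset \sigma} A_\tau^{(j)}$ of Theorem~\ref{thm:col-komiya} holds for every face $\sigma$ and color $j$.

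Applying Theorem~\ref{thm:col-komiya} then yields faces $\sigma_1,\dots,\sigma_{k+1}$ with $\bigcap_j A_{\sigma_j}^{(j)} \neq \emptyset$ and $0 \in \conv\{y_{\sigma_1}^{(1)},\dots,y_{\sigma_{k+1}}^{(k+1)}\}$. The plan is to pick the $y_\sigma^{(j)}$ so that this convex-hull constraint is precisely the combinatorial condition that the selected faces collectively cover every admissible block pattern of a $d$-interval inside $M_i$; the common point of the $A_{\sigma_j}^{(j)}$ then decodes to the desired $k+1$ piercing points in $M_i$. The main obstacle is the explicit construction of $P$ and the families $(y_\sigma^{(j)}, A_\sigma^{(j)})$: the covering hypothesis must be verified and the convex-hull condition must carry the intended combinatorial meaning, so that the number $k+1$ of colors is forced to equal $d^2-d+1$ (respectively $d^2-d$), reflecting the extremal combinatorics of unions of $d$ consecutive blocks among the $T$ components. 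The refinement in the separated case follows by exploiting the rigidity imposed by the forced placement of the $d$ components in prescribed unit intervals to remove one color.
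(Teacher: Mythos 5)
The decisive problem is your reduction to a single matching edge: it is simply false that for each matching edge $M_i$ one can pierce all edges of $H$ meeting $M_i$ by $d^2-d+1$ points lying \emph{inside} $M_i$. Take $d=2$, $M_1=[0,1]\cup[10,11]$, and $h_j=[a_j,b_j]\cup[20,21]$ for $j=1,\dots,n$, where $[a_1,b_1],\dots,[a_n,b_n]$ are pairwise disjoint subintervals of $[0,1]$. Any two edges of $\{M_1,h_1,\dots,h_n\}$ intersect, so $\nu=1$ and $\{M_1\}$ is a maximum matching; yet $h_j$ meets $M_1$ only in $[a_j,b_j]$, so piercing all edges meeting $M_1$ by points of $M_1$ requires $n$ points. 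Hence no bound of the form ``boundedly many points inside each matching edge'' can exist (the correct cover here uses a point of $[20,21]$, outside the matching), and summing local piercing sets over the $M_i$ cannot yield Theorem~\ref{t:kaiser}. Beyond this, the remainder of your text is a program rather than an argument: the polytope $P$ with $k=d^2-d$, the points $y_\sigma^{(j)}$, the sets $A_\sigma^{(j)}$, the verification of the covering hypothesis, and the ``decoding'' of the common intersection point are all left unconstructed, and these are exactly where the proof would have to live.

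For comparison, the paper obtains Theorem~\ref{t:kaiser} as the monochromatic case of Theorem~\ref{coloreddintervals} (take all $F_i=H$; choosing $k=(d^2-d+1)\nu(H)$, resp.\ $k=(d-1)\nu(H)$, gives a contradiction if $\tau$ were larger), and the proof of Theorem~\ref{coloreddintervals} runs in the opposite direction from yours: assuming $\tau$ is large, a point $\vec{x}$ of $\Delta_k$ (resp.\ of $(\Delta_k)^d$) encodes $k$ (resp.\ $kd$) candidate piercing points via the partial sums $p_{\vec{x}}(j)$; the sets $B^i_T$ of those $\vec{x}$ for which some $d$-interval escapes the candidate piercers form a KKMS/Komiya cover; the conclusion produces a balanced family $\{T_1,\dots,T_{k+1}\}$ with a common point, balancedness is converted into a perfect fractional matching so that $\nu^*\ge \frac{k+1}{d}$ by Lemma~\ref{rank}, and F\"uredi's theorem (Theorem~\ref{furedi}) turns this into an integral matching of size at least $\frac{k+1}{d^2-d+1}$, resp.\ $\frac{k+1}{d-1}$ using $d$-partiteness in the separated case. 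In particular the constants $d^2-d+1$ and $d^2-d$ come from F\"uredi's fractional-to-integral bound, not from choosing the ambient dimension to be $d^2-d$, and the uncolored statement needs only Komiya/KKMS rather than the colorful Theorem~\ref{thm:col-komiya}. Your plan contains no mechanism that would produce these constants, so even setting aside the false localization step, the proposal does not amount to a proof.
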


Matou\v{s}ek~\cite{matousek} showed that this bound is not far from
the truth: There are examples of hypergraphs of
$d$-intervals in which $\tau = \Omega(\frac{d^2}{\log
  d}\nu)$. Aharoni, Kaiser and Zerbib~\cite{AKZ} gave a proof of Theorem~\ref{t:kaiser} that used the KKMS theorem and Komiya's polytopal extension, Theorem~\ref{thm:komiya}. 
Using Theorem \ref{thm:col-komiya} we prove here a colorful generalization of Theorem~\ref{t:kaiser}:

\begin{theorem}\label{coloreddintervals}
\begin{enumerate}[1.] 
\item   Let $\F_i,~ i\in[k+1]$, be $k+1$ hypergraphs of $d$-intervals and let $\F= \bigcup_{i=1}^{k+1} \F_i$.
If $\tau(\F_i )>k$ for all $i\in[k+1]$, then there exists a collection $\M$  of pairwise disjoint $d$-intervals in $\F$ of size $|\M| \ge \frac{k+1}{d^2-d+1}$, with $|\M\cap \F_i| \le 1$. 
\item Let $\F_i,~ i\in[kd+1]$, be $kd+1$ hypergraphs of separated $d$-intervals and let $\F= \bigcup_{i=1}^{k+1} \F_i$.
If $\tau(\F_i )>kd$ for all $i \in [k+1]$, then there exists a collection $\M$  of pairwise disjoint separated $d$-intervals in $\F$ of size $|\M| \ge \frac{k+1}{d-1}$, with $|\M\cap \F_i | \le 1$. 
\end{enumerate}      
\end{theorem}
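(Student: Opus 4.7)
The plan is to adapt the proof of Theorem~\ref{t:kaiser} given by Aharoni, Kaiser, and Zerbib~\cite{AKZ}, substituting our colorful polytopal KKMS theorem (Theorem~\ref{thm:col-komiya}) for the polytopal KKMS theorem (Theorem~\ref{thm:komiya}) that they used. The match-up of parameters is natural: the $k$ in the piercing hypothesis $\tau(F_i) > k$ coincides with the dimension $k$ of the polytope in Theorem~\ref{thm:col-komiya}, and the $k+1$ colors in Theorem~\ref{thm:col-komiya} correspond to the $k+1$ hypergraphs $F_1,\dots,F_{k+1}$.

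For part~(1), I will take $P \subset \R^k$ to be the same $k$-dimensional polytope with $0 \in P$ used in the AKZ argument for the uncolored theorem. For each color $i \in [k+1]$, I replicate the AKZ labeling applied to $F_i$ alone: for every face $\sigma$ of $P$ I specify a point $y^{(i)}_\sigma \in C_\sigma$ and a closed set $A^{(i)}_\sigma \subset P$. In the AKZ construction each face of $P$ is tagged by a $d$-interval chosen relative to a minimum piercing of the hypergraph, and $A^{(i)}_\sigma$ records the locus in $P$ where this tagged $d$-interval is ``active''. The crucial point is that the covering condition $\sigma \subset \bigcup_{\tau \subset \sigma} A^{(i)}_\tau$ holds exactly when no piercing of $F_i$ of size at most $k$ exists, which is precisely the hypothesis $\tau(F_i) > k$.

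Applying Theorem~\ref{thm:col-komiya} then yields faces $\sigma_1,\dots,\sigma_{k+1}$ of $P$ satisfying $0 \in \conv\{y^{(1)}_{\sigma_1},\dots,y^{(k+1)}_{\sigma_{k+1}}\}$ together with a common point $x^\star \in \bigcap_{i} A^{(i)}_{\sigma_i}$. Decoding $\sigma_i$ via the labeling gives a $d$-interval $h_i \in F_i$, so the collection $\{h_1,\dots,h_{k+1}\}$ automatically satisfies the rainbow constraint $|\M \cap F_i| \le 1$. The convex hull condition on the chosen $y^{(i)}_{\sigma_i}$, combined with the combinatorial lemma on $d$-intervals underlying Tardos's original proof, forces the family $\{h_1,\dots,h_{k+1}\}$ to contain a pairwise disjoint subfamily of size at least $\frac{k+1}{d^2-d+1}$, which is the rainbow matching sought in part~(1). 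Part~(2) is entirely analogous: one uses the variant of the AKZ construction tailored to separated $d$-intervals, where the sharper combinatorial bound requires the stronger hypothesis $\tau(F_i) > kd$ to ensure the covering condition, and produces a rainbow disjoint subfamily of size at least $\frac{k+1}{d-1}$.

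The main obstacle is a conceptual one: the AKZ labeling is built from a particular minimum piercing of a single hypergraph, so one must check that assigning each color $i$ its own labeling built from a minimum piercing of $F_i$ still produces $k+1$ covers satisfying the hypotheses of Theorem~\ref{thm:col-komiya}. This is made possible by the structure of Theorem~\ref{thm:col-komiya}, which treats the $k+1$ closed covers as independent data subject only to the individual face-covering conditions; no global coordination between colors is required beyond using the same polytope $P$ throughout. With this observation, the AKZ verification of the covering condition goes through one color at a time, and the conclusion of Theorem~\ref{thm:col-komiya} directly produces the rainbow matching.
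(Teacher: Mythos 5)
Your high-level plan is indeed the paper's: build, for each color $i$, the AKZ-style cover of a polytope whose points encode candidate piercing sets of size $k$ (resp.\ $kd$), verify the covering condition color by color from $\tau(F_i)>k$ (resp.\ $>kd$), and apply Theorem~\ref{thm:col-komiya} to get a rainbow intersecting family of cover sets. But the proposal has a genuine gap exactly where the quantitative conclusion is produced. The convex-hull condition does not by itself ``force'' the family $\{h_1,\dots,h_{k+1}\}$ to contain a disjoint subfamily of size $\frac{k+1}{d^2-d+1}$, and there is no ``combinatorial lemma on $d$-intervals underlying Tardos's original proof'' that does this; Tardos's argument is a topological one for $2$-intervals and plays no role here. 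What is actually needed is the following chain, which your sketch omits entirely: the faces $\sigma_i$ returned by Theorem~\ref{thm:col-komiya} are faces of the simplex, i.e.\ label sets $T_i\subset[k+1]$ of size at most $d$, and the convex-hull condition says $\{T_1,\dots,T_{k+1}\}$ is balanced, i.e.\ the auxiliary hypergraph $H=([k+1],\{T_1,\dots,T_{k+1}\})$ of rank $d$ has a perfect fractional matching; then $\nu^*(H)\ge\frac{k+1}{d}$ (Lemma~\ref{rank}), and F\"uredi's theorem (Theorem~\ref{furedi}) converts this fractional matching into an integral matching of size $\ge\frac{k+1}{d^2-d+1}$. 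Finally, the nonempty intersection $\bigcap_i B^i_{T_i}$ is essential for decoding: a common point $\vec{x}$ fixes one partition of $[0,1]$ into gaps, each $T_i$ in the integral matching is witnessed by a $d$-interval of $F_i$ lying entirely inside the gaps indexed by $T_i$, and disjointness of the label sets is what yields disjointness of the $d$-intervals. Without the auxiliary hypergraph, F\"uredi's fractional-to-integral step, and the role of the common point, the bound $\frac{k+1}{d^2-d+1}$ is not justified.

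Two further inaccuracies: the AKZ covers are not ``tagged by a $d$-interval chosen relative to a minimum piercing''; the set $B^i_T$ is defined by the existence of some $d$-interval of $F_i$ avoiding the $k$ breakpoints determined by the point of the simplex, and $\tau(F_i)>k$ is used only to show these sets cover. And for part~(2) the relevant polytope is not a $k$-dimensional polytope but the product $(\Delta_k)^d$ (this is where the polytopal generality of Theorem~\ref{thm:col-komiya}, beyond Shih--Lee, is really used); the improved bound $\frac{k+1}{d-1}$ comes from the auxiliary hypergraph being $d$-partite together with the $d$-partite case of Theorem~\ref{furedi}, not merely from the stronger hypothesis $\tau(F_i)>kd$, which only serves to restore the covering condition when there are $kd$ breakpoints.
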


Note that Theorem~\ref{t:kaiser} is the case where all the hypergraphs $\F_i $ are the same. 
In Section~\ref{sec:komiya} we introduce some notation and, as an introduction to our methods, provide a new simple proof of Komiya's theorem. Then, in Section~\ref{sec5}, we prove Theorem~\ref{thm:col-komiya} and use it to derive B\'ar\'any's colorful Carath\'eodory theorem. Section~\ref{sec:interval} 
is devoted to the proof of Theorem~\ref{coloreddintervals}.

\section{Coverings of polytopes and Komiya's theorem}
\label{sec:komiya}

Let $\Delta_k$ be the $k$-dimensional simplex with vertex set $[k+1]$ realized in $\R^{k+1}$ as 
$\{x \in \R^{k+1}_{\ge 0} \: : \: \sum_{i=1}^{k+1} x_i = 0\}$. For every $S\subset[k+1]$ let $\Delta^S$ be the face of $\Delta_k$ spanned by the vertices in~$S$.
Recall that the KKM theorem asserts that if $A_1,\dots,A_{k+1}$ are closed sets covering $\Delta_k$ so that 
$\Delta^S \subset \bigcup_{i\in S} A_i$ for every $S\subset [k+1]$, then the intersection of all the sets $A_i$ is non-empty.
We will refer to covers $A_1, \dots, A_{k+1}$ as above as \emph{KKM cover}.

A generalization of this result, known as the KKMS theorem, was proven by Shapley~\cite{shapley} in 1973. Now we have a cover of $\Delta_k$ by closed sets $A_T,~T\subset [k+1]$, so that $\Delta^S \subset \bigcup_{T\subset S} A_T$ for every $S\subset [k+1]$. Such a collection of sets $A_T$ is called {\em KKMS cover}. The conclusion of the KKMS theorem is that there exists a balanced collection of $T_1,\dots,T_m$ of subsets of $[k+1]$ for which $\bigcap_{i=1}^{m} A_{T_i} \neq \emptyset$. Here $T_1,\dots,T_m$ form a balanced collection if the barycenters of the corresponding faces $\Delta_{T_1},\dots,\Delta_{T_m}$ contain the barycenter of $\Delta_k$ in their convex hull.  

A different generalization of the KKM theorem is a colorful version due to Gale~\cite{gale1984}. It states that given $k+1$ KKM covers
$A_1^{(i)}, \dots, A_{k+1}^{(i)}$, $i \in [k+1]$, of the $k$-simplex~$\Delta_k$, there is a permutation $\pi$ of $[k+1]$
such that $\bigcap_{i \in [k+1]} A_{\pi(i)}^{(i)}$ is nonempty. This theorem is colorful in the sense that we think of each 
KKM cover as having a different color; the theorem then asserts that there is an intersection of $k+1$ sets of pairwise
distinct colors associated to pairwise distinct vertices. Asada et al.~\cite{asada2017} showed that one can additionally prescribe~$\pi(1)$.

In 1993 Shih and Lee~\cite{ShihLee} proved a common generalization of the KKMS theorem and Gale's colorful KKM theorem: Given $k+1$ such KKMS covers $A_T^{i},~T\subset [k+1],~i\in [k+1]$, of $\Delta_k$, there exists a balanced collection of $T_1,\dots,T_{k+1}$ of subsets of $[k+1]$ for which we have $\bigcap_{i=1}^{m} A_{T_i}^i \ne \emptyset$.

Another far reaching extension of the KKMS theorem to general polytopes is due to Komiya~\cite{komiya} from 1994. Komiya proved that the simplex $\Delta_k$ in the KKMS theorem can be replaced by any $k$-dimensional polytope $P$, and that the barycenters of the faces can be replaced by any points $y_{\sigma}$ in the face~$\sigma$:

\begin{theorem}[Komiya's theorem~\cite{komiya}]
\label{thm:komiya}
	Let $P$ be a polytope, and for every nonempty face $\sigma$ of $P$ choose a point $y_\sigma \in \sigma$
	and a closed set $A_\sigma \subset P$. If $\sigma \subset \bigcup_{\tau \subset \sigma} A_\tau$
	for every face~$\sigma$ of~$P$,  then there are faces $\sigma_1, \dots, \sigma_m$ of $P$ such that
	$y_P \in \conv\{y_{\sigma_1}, \dots, y_{\sigma_m}\}$ and $\bigcap_{i=1}^m A_{\sigma_i} \neq \emptyset$. 
\end{theorem}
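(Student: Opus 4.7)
The plan is to prove Komiya's theorem by a topological mapping-degree argument built on a Sperner-type labeling of a fine triangulation of $P$. First I would reduce to the case $y_P \in \mathrm{int}(P)$: if $y_P$ lies in a proper face $\sigma_0$ of $P$, restrict the whole setup (the points $y_\tau$, the closed sets $A_\tau \cap \sigma_0$, and the covering condition) to $\sigma_0$ and run the argument there, since every face of $\sigma_0$ is a face of $P$.

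Next, fix a triangulation $T$ of $P$ that refines the face structure of $P$ (so each face of $P$ is a subcomplex of $T$) and whose mesh is small. For each vertex $u$ of $T$, let $\sigma_u$ be the minimal face of $P$ containing $u$; the covering hypothesis applied to $\sigma_u$ produces some face $\ell(u) \subset \sigma_u$ with $u \in A_{\ell(u)}$. Define $f_T : P \to P$ by $f_T(u) = y_{\ell(u)}$ on each vertex of $T$, extended affinely over each simplex. Since $y_{\ell(u)} \in \ell(u) \subset \sigma_u$, and each face of $P$ is convex and subdivided by $T$, the map $f_T$ sends every face of $P$ into itself.

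Now the degree step. Because $f_T$ preserves faces, the straight-line homotopy $H_t(x) = (1-t)x + t\,f_T(x)$ stays inside every face by convexity, so $f_T|_{\partial P}$ is homotopic to the identity on $\partial P$ and hence has degree $1$. Since $y_P \in \mathrm{int}(P)$, a standard Brouwer degree argument then produces some $x_T \in P$ with $f_T(x_T) = y_P$. Expressing $x_T$ as a convex combination of the vertices $u_1, \dots, u_r$ of the simplex of $T$ containing it, we obtain nonnegative weights $\lambda_i$ summing to $1$ with
\[
y_P \;=\; \sum_{i=1}^{r} \lambda_i \, y_{\ell(u_i)}, \qquad u_i \in A_{\ell(u_i)} \text{ for every } i.
\]

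To finish, I would run the construction along a sequence of triangulations whose mesh tends to $0$. Since $P$ has only finitely many faces, the label multiset $\{\ell(u_1), \dots, \ell(u_r)\}$ takes only finitely many values, so along a subsequence it is constant, equal to some $\{\tau_1, \dots, \tau_m\}$, the weights $\lambda_i$ converge, and $x_T$ together with each $u_i$ converge to a common point $x^\ast \in P$. Closedness of $A_{\tau_j}$ then yields $x^\ast \in \bigcap_{j=1}^{m} A_{\tau_j}$, and passing to the limit in the convex combination gives $y_P \in \conv\{y_{\tau_1}, \dots, y_{\tau_m}\}$. The main obstacle is arranging the degree step so that $f_T|_{\partial P}$ visibly has nonzero degree; the key insight is that face-preservation combined with convexity of the faces traps the straight-line homotopy inside $\partial P$, after which the limiting argument is routine compactness and the case $y_P \in \partial P$ is already absorbed by the initial reduction.
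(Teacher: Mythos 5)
Your argument is correct and follows essentially the same route as the paper: a Sperner--Shapley-type labeling of a fine triangulation, the induced face-preserving piecewise-linear map, the observation that it has degree one on $\partial P$ and therefore hits $y_P$, and a final compactness/limiting argument over triangulations of shrinking mesh. The only cosmetic differences are that the paper isolates the degree step as a standalone statement (Theorem~\ref{thm:sperner-s}) and handles $y_P \in \partial P$ by noting that the degree-one map is surjective on all of $P$, whereas you reduce to the interior case by passing to the (minimal) face of $P$ containing $y_P$ --- both work.
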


This specializes to the KKMS theorem if $P$ is the simplex and each point $y_\sigma$ is the barycenter
of the face~$\sigma$. Moreover, there are quantitative versions of the KKM theorem due to De Loera, Peterson, and Su~\cite{deLoera2002}
as well as Asada et al.~\cite{asada2017} and KKM theorems for general pairs of spaces due to Musin~\cite{musin2017}.

To set the stage we will first present a simple proof of Komiya's theorem. Recall that 
the KKM theorem can be easily deduced from Sperner's lemma on vertex labelings of triangulations of a simplex.
Our proof of Komiya's theorem -- just as Shapley's original proof of the KKMS theorem -- first establishes an equivalent Sperner-type 
version. A \emph{Sperner--Shapley labeling} of a triangulation $T$ of a polytope $P$ is a map 
$f\colon V(T) \longrightarrow \{\sigma \: : \: \sigma \ \text{a nonempty face of} \ P\}$ from the vertex set $V(T)$ of $T$ to the set of nonempty
faces of $P$ such that $f(v) \subset \supp(v)$, where $\supp(v)$ is the minimal face of $P$ containing $v$.
We prove the following polytopal Sperner--Shapley theorem that will imply Theorem~\ref{thm:komiya} by a limiting and 
compactness argument:

\begin{theorem}
\label{thm:sperner-s}
	Let $T$ be a triangulation of the polytope~$P \subset \R^k$, and let $f\colon V(T) \longrightarrow \{\sigma \: : \: \sigma \ \text{a nonempty face of} \ P\}$
	be a Sperner--Shapley labeling of $T$. For every nonempty face $\sigma$ of $P$ choose a point $y_\sigma \in \sigma$.
	Then there is a face $\tau$ of $T$ such that $y_P \in \conv\{y_{f(v)} \: : \: v \ \text{vertex of} \ \tau\}$.
\end{theorem}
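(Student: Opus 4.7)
The plan is to reformulate the statement as a surjectivity assertion for a simplicial self-map of $P$. Define $g \colon P \to P$ as the unique simplicial map with respect to the triangulation $T$ satisfying $g(v) = y_{f(v)}$ on each vertex $v \in V(T)$ and extended affinely over each simplex of $T$; the image lies in $P$ because each $y_{f(v)} \in f(v) \subset P$ and $P$ is convex. Once this is set up, if $y_P \in g(P)$ then $y_P \in g(\tau) = \conv\{y_{f(v)} : v \text{ a vertex of } \tau\}$ for some simplex $\tau$ of $T$, which is the desired conclusion.

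The Sperner--Shapley hypothesis translates directly into a face-preserving property: $g(\sigma) \subset \sigma$ for every face $\sigma$ of $P$. Indeed, any vertex $v$ of $T$ lying in $\sigma$ has $\supp(v) \subset \sigma$, so $f(v) \subset \sigma$ and thus $g(v) = y_{f(v)} \in \sigma$; convexity of $\sigma$ propagates this to the affine extensions over the simplices of $T|_\sigma$. In particular $g(\partial P) \subset \partial P$, and the straight-line homotopy $H(x,t) = (1-t)x + t g(x)$ preserves each face (since both endpoints lie in $\sigma$ whenever $x \in \sigma$), giving a homotopy on $\partial P$ between $g|_{\partial P}$ and the identity. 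Hence $g|_{\partial P}$ has degree one as a self-map of $\partial P \cong S^{k-1}$.

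To finish we first reduce to the case $y_P \in \mathrm{int}(P)$. If $y_P$ lies on the boundary, let $\sigma_0$ be the unique smallest face of $P$ containing $y_P$, so that $y_P$ sits in the relative interior of $\sigma_0$; the restriction of $f$ to the vertices of $T$ lying in $\sigma_0$ is a Sperner--Shapley labeling of the triangulation $T|_{\sigma_0}$ of the polytope $\sigma_0$, and it suffices to prove the conclusion there (by induction on dimension, with the zero-dimensional case being trivial). Assuming now $y_P \in \mathrm{int}(P)$, suppose for contradiction that $y_P \notin g(P)$. Radial projection from $y_P$ yields a continuous retraction $r \colon P \setminus \{y_P\} \to \partial P$, and then $r \circ g \colon P \to \partial P$ extends $g|_{\partial P}$ to all of $P$; since $P$ is contractible, this forces $g|_{\partial P}$ to be null-homotopic, contradicting degree one. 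Thus $y_P \in g(P)$, and any simplex $\tau$ of $T$ whose image under $g$ contains $y_P$ furnishes the required face.

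The principal obstacle I anticipate is making the topological degree argument fully rigorous: one needs that $\partial P$ is genuinely a $(k-1)$-sphere (which holds for a full-dimensional polytope), that radial projection from an interior point defines a continuous retraction, and that the inductive reduction on the dimension of the smallest face containing $y_P$ is formally clean. Everything else is essentially bookkeeping.
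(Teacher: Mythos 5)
Your proposal is correct and is essentially the paper's own argument: the same simplicial map $v \mapsto y_{f(v)}$, the same face-preservation property from the Sperner--Shapley condition, the straight-line homotopy to the identity on $\partial P$, and the degree-one/surjectivity conclusion, with you supplying the surjectivity details the paper leaves implicit. One small precision point: in the boundary case the inductive step should be phrased as proving surjectivity of $g$ restricted to each face (equivalently, note that $g(P)$ is compact and contains $\mathrm{int}(P)$, hence equals $P$), since applying the theorem verbatim to the polytope $\sigma_0$ would place $y_{\sigma_0}$, not $y_P$, in the convex hull, and re-choosing the top point would alter the values $y_{f(v)}$ for vertices labeled $\sigma_0$.
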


\begin{proof}
	The Sperner--Shapley labeling $f$ maps vertices of 
	the triangulation $T$ of $P$ to faces of~$P$; thus mapping vertex $v$ to the chosen point $y_{f(v)}$ in the face~$f(v)$ 
	and extending linearly onto faces of~$T$ defines a continuous map $F\colon P \longrightarrow P$. By the Sperner--Shapley 
	condition for every face $\sigma$ of $P$ we have that $F(\sigma) \subset \sigma$. This implies that $F$ is 
	homotopic to the identity on~$\partial P$, and thus $F|_{\partial P}$ has degree one. Then $F$ is surjective 
	and we can find a point $x \in P$ such that $F(x) = y_P$. Let $\tau$ be the smallest face of $T$ containing~$x$.
	By definition of $F$ the image $F(\tau)$ is equal to the convex hull $\conv\{y_{f(v)} \: : \: v \ \text{vertex of} \ \tau\}$.
\end{proof}

\begin{proof}[Proof of Theorem~\ref{thm:komiya}]
	Let $\varepsilon > 0$, and let $T$ be a triangulation of~$P$ such that every face of $T$ has diameter at most~$\varepsilon$. 
	Given a cover $\{A_\sigma \: : \: \sigma \ \text{a nonempty face of} \ P\}$ that satisfies the covering condition of the theorem 
	we define a Sperner--Shapley labeling in the following way: 
	For a vertex $v$ of $T$, label $v$ by a face $\sigma \subset \supp(v)$ such that $v\in A_{\sigma}$. 
	Such a face $\sigma$ exists since $v\in\supp(v) \subset \bigcup_{\sigma \subset \supp(v)} A_{\sigma}$. Thus by 
	Theorem~\ref{thm:sperner-s} there is a face $\tau$ of $T$ whose vertices are labeled by faces 
	$\sigma_1, \dots, \sigma_m$ of $P$ such that $y_P \in \conv\{y_{\sigma_1}, \dots, y_{\sigma_m}\}$. 
	In particular, the $\varepsilon$-neighborhoods of the sets $A_{\sigma_i}$, $i \in [m]$, intersect.
	Now let $\varepsilon$ tend to zero. As there
	are only finitely many collections of faces of~$P$, one collection $\sigma_1, \dots, \sigma_m$ must appear infinitely
	many times. By compactness of $P$ the sets $A_{\sigma_i}$, $i \in [m]$, then all intersect since they are closed.
\end{proof}

Note that Theorem \ref{thm:komiya} is true also if all the sets $A_{\sigma}$ are open in~$P$. 
Indeed, given an open cover $\{A_\sigma \: : \: \sigma \ \text{a nonempty face of} \ P\}$ of $P$ as in Theorem~\ref{thm:komiya},
we can find closed sets $B_\sigma \subset A_\sigma$ that have the same nerve as $A_\sigma$ (namely, any collection of sets $\{B_{\sigma_i}\: : \:  i\in I\}$ intersects if and only if  the corresponding collection $\{A_{\sigma_i}\: : \: i\in I\}$ intersects) and
still satisfy $\sigma \subset \bigcup_{\tau \subset \sigma} B_\tau$ for every face~$\sigma$ of~$P$.

\section{A colorful Komiya theorem}\label{sec5}

Recall that the colorful KKMS theorem of Shih and Lee~\cite{ShihLee} states the following: If for every $i\in [k+1]$ the collection $\{A^{i}_{\sigma} \: : \: \sigma \text{ a nonempty face of }\Delta_k\}$ forms a KKMS cover of $\Delta_k$, then there exists a balanced collection of faces $\sigma_1,\dots,\sigma_{k+1}$ so that $\bigcap_{i=1}^{k+1}A_{\sigma_i}^i \neq \emptyset$. Theorem~\ref{thm:col-komiya}, proved in this section, is a colorful extension of Theorem~\ref{thm:komiya}, and thus generalizes the colorful KKMS theorem to any polytope. 
  
Let $P$ be a $k$-dimensional polytope. Suppose that for every nonempty face $\sigma$ of $P$ we choose $k+1$ points
$y^{(1)}_\sigma, \dots, y^{(k+1)}_\sigma \in \sigma$ and $k+1$ closed sets $A^{(1)}_\sigma, \dots, A^{(k+1)}_\sigma \subset P$, so that $\sigma \subset \bigcup_{\tau \subset \sigma} A^{(j)}_\tau$ for every face~$\sigma$ of~$P$ and every~${j\in [k+1]}$.
Theorem~\ref{thm:komiya} now guarantees that for every fixed $j \in [k+1]$ there are faces $\sigma_1^{(j)},\dots, \sigma_{m_j}^{(j)}$ of $P$ such that
$y_P^{(j)} \in \conv\{y_{\sigma_1}^{(j)}, \dots, y_{\sigma_{m_j}}^{(j)}\}$ and $\bigcap_{i=1}^{m_j} A_{\sigma_i}^{(j)}$ is nonempty. 
Now let us choose $y_P^{(1)} = y_P^{(2)} = \dots = y_P^{(k+1)}$ and denote this point by~$y_P$.
The colorful Carath\'eodory theorem implies the existence of points $z_j \in \{y_{\sigma_1}^{(j)}, \dots, y_{\sigma_{m_j}}^{(j)}\}$,
$j \in [k+1]$, such that $y_P \in \conv\{z_1, \dots, z_{k+1}\}$. Theorem \ref{thm:col-komiya} shows that this implication can be realized simultaneously with the existence of sets
$B_j \in \{A_{\sigma_1}^{(j)}, \dots, A_{\sigma_{m_j}}^{(j)}\}$, $j \in [k+1]$, such that $\bigcap_{j=1}^{k+1} B_j$ is nonempty. 
We prove Theorem~\ref{thm:col-komiya} by applying the Sperner--Shapley version of Komiya's theorem -- Theorem~\ref{thm:sperner-s} -- to
a labeling of the barycentric subdivision of a triangulation of~$P$. The same idea was used by Su~\cite{su1999} to prove a colorful Sperner's lemma.
For related Sperner-type results for multiple Sperner labelings see Babson~\cite{babson2012}, Bapat~\cite{bapat1989}, 
and Frick, Houston-Edwards, and Meunier~\cite{frick2017}.

\begin{proof}[Proof of Theorem~\ref{thm:col-komiya}]
	Let $\varepsilon > 0$, and let $T$ be a triangulation of~$P$ such that every face of $T$ has diameter at most~$\varepsilon$. 
We will also assume that the chosen points $y^{(1)}_\sigma, \dots, y^{(k+1)}_\sigma$ are contained in~$\sigma$. This assumption does not restrict the generality of our proof 
since $0 \in \conv\{x_1, \dots, x_{k+1}\}$ for vectors $x_1, \dots, x_{k+1} \in \R^k$ if and only if
	$0 \in \conv\{\alpha_1x_1, \dots, \alpha_{k+1}x_{k+1}\}$ with arbitrary coefficients $\alpha_i > 0$.
	Denote by $T'$ the barycentric subdivision of~$T$. We now define a Sperner--Shapley labeling of the vertices of $T'$: 
For	$v\in V(T')$ let $\sigma_v$ be the face of $T$ so that $v$ lies at the barycenter of $\sigma_v$, let $\ell=\dim \sigma_v$, and let $\sigma$ be the minimal supporting face of $P$ containing  $\sigma_v$. By the conditions of the theorem, $v$ is contained in a set $A^{(\ell+1)}_\tau$ where $\tau \subset \sigma$. 
	We label $v$ by~$\tau$. Thus by Theorem~\ref{thm:sperner-s} there exists a face $\tau$ of $T'$ (without loss of generality $\tau$ is a facet) 
	whose vertices are labeled by faces $\sigma_1, \dots, \sigma_{k+1}$ of $P$ such that 
	$0 \in \conv\{y^{(1)}_{\sigma_1}, \dots, y^{(k+1)}_{\sigma_{k+1}}\}$.  In particular, the $\varepsilon$-neighborhoods of the 
	sets $A^{(i)}_{\sigma_i}$, $i \in [k+1]$, intersect. Now use a limiting argument as before.
\end{proof}

Note that by the same argument as before, Theorem \ref{thm:col-komiya} is true also if all the sets $A^{(i)}_\sigma$ are open. 

For a point $x\neq 0$ in $\R^k$ let $H(x) = \{y \in \R^k \: : \: \langle x,y \rangle = 0\}$ be the hyperplane perpendicular to $x$ and let $H^+(x) = \{y \in \R^k \: : \: \langle x,y \rangle \ge 0\}$ be the closed halfspace with boundary $H(x)$ containing~$x$. 
Let us now show that B\'ar\'any's colorful Carath\'eodory theorem is a special case of Theorem~\ref{thm:col-komiya}. 

\begin{theorem}[Colorful Carath\'eodory theorem, B\'ar\'any~\cite{barany}]
\label{thm:col-car}
	Let $X_1, \dots, X_{k+1}$ be finite subsets of $\R^k$ with $0 \in \conv X_i$ for every $i \in [k+1]$. Then there are
	$x_1 \in X_1, \dots, x_{k+1} \in X_{k+1}$ such that $0 \in \conv\{x_1, \dots, x_{k+1}\}$.
\end{theorem}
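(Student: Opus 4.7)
The plan is to specialize Theorem~\ref{thm:col-komiya} to a setup extracted directly from the data $X_1, \dots, X_{k+1}$. First I would make the harmless reductions that $0 \notin X_i$ for every $i$ (else take $x_i = 0$) and, via a generic perturbation together with a final compactness argument, that $0 \in \mathrm{int}(\conv X_i)$ for every $i$. This last condition ensures in particular that the positive hull of each $X_i$ is all of $\R^k$, equivalently, that the closed halfspaces $\{H^+(x) : x \in X_i\}$ cover $\R^k$.

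Next I would pick a $k$-dimensional polytope $P \subset \R^k$ with $0 \in \mathrm{int}(P)$, chosen in terms of the $X_i$'s (a natural candidate is $P = \conv(\bigcup_i X_i)$ after generic perturbation). For every proper face $\sigma$ of $P$ and every color $i \in [k+1]$, set $A^{(i)}_\sigma := C_\sigma \cap P$ and select $y^{(i)}_\sigma \in C_\sigma$ to be (a positive scalar multiple of) a point of $X_i$ lying in $C_\sigma$. The KKM-type covering hypothesis $\sigma \subset \bigcup_{\tau \subset \sigma} A^{(i)}_\tau$ of Theorem~\ref{thm:col-komiya} then holds automatically, since $\bigcup_{\tau \subset \sigma} C_\tau = C_\sigma \supset \sigma$ for every face $\sigma$ of $P$. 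Applying Theorem~\ref{thm:col-komiya} produces faces $\sigma_1, \dots, \sigma_{k+1}$ of $P$ with $0 \in \conv\{y^{(1)}_{\sigma_1}, \dots, y^{(k+1)}_{\sigma_{k+1}}\}$. Setting $x_i$ to be the point of $X_i$ of which $y^{(i)}_{\sigma_i}$ is a positive multiple, and invoking the observation from the proof of Theorem~\ref{thm:col-komiya} that $0 \in \conv\{x_1, \dots, x_{k+1}\}$ if and only if $0 \in \conv\{\alpha_1 x_1, \dots, \alpha_{k+1} x_{k+1}\}$ for arbitrary $\alpha_i > 0$, yields the desired Colorful Carath\'eodory configuration.

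The main obstacle is ensuring that $X_i \cap C_\sigma \neq \emptyset$ for every proper face $\sigma$ of $P$ and every $i$, so that genuine choices $y^{(i)}_\sigma \in X_i \cap C_\sigma$ are possible. This is immediate for facets $\sigma = F$, where $C_F$ is a full-dimensional cone: since the halfspaces $H^+(x)$ ($x \in X_i$) cover $\R^k$, every such $C_F$ meets $X_i$. But for a low-dimensional face (most severely a vertex, for which $C_\sigma$ is only a ray), $X_i$ may miss $C_\sigma$. The remedy I expect is to engineer $P$ carefully so that the cones $C_\sigma$ over all proper faces meet every $X_i$---for instance by taking $P$ whose vertices lie along common rays of the $X_i$'s---or alternatively to arrange the lower-dimensional $A^{(i)}_\sigma$ so that the conclusion of Theorem~\ref{thm:col-komiya} is forced to produce facets $\sigma_i$, for which the required choice of $y^{(i)}_{\sigma_i} \in X_i$ is automatic. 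The technical heart of the deduction is this careful design of the polytope $P$ and the cover, after which Theorem~\ref{thm:col-komiya} does all the work.
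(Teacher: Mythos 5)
Your reduction to Theorem~\ref{thm:col-komiya} has a genuine gap, and it sits exactly at the point you flag as ``the main obstacle.'' With your choice $A^{(i)}_\sigma = C_\sigma\cap P$ the covering hypothesis is indeed automatic, but for the same reason the conclusion $\bigcap_i A^{(i)}_{\sigma_i}\neq\emptyset$ is vacuous: every cone $C_\sigma$ contains the origin, so the intersection condition gives you no leverage whatsoever over which faces $\sigma_1,\dots,\sigma_{k+1}$ come out. Consequently nothing prevents the theorem from returning low-dimensional faces (e.g.\ vertices) whose cones miss $X_i$, where your $y^{(i)}_\sigma$ would have to be a fallback point unrelated to $X_i$, and the conclusion $0\in\conv\{y^{(1)}_{\sigma_1},\dots,y^{(k+1)}_{\sigma_{k+1}}\}$ is then useless. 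Your two proposed remedies do not repair this: there are generically no rays through the origin meeting more than one of the sets $X_i$, so no polytope has vertices ``along common rays of the $X_i$'s''; and the covering condition for a face $\sigma$ only involves sets $A^{(i)}_\tau$ with $\tau\subset\sigma$, so low-dimensional faces must be covered by low-dimensional sets and you cannot rig the cover to force facets in the conclusion. Note also that your claim that the covering of $\R^k$ by the halfspaces $H^+(x)$, $x\in X_i$, forces every full-dimensional facet cone to meet $X_i$ is false (a thin cone can avoid a finite set whose positive hull is all of $\R^k$), and that in your construction the hypothesis $0\in\conv X_i$ is never used in any essential way --- a warning sign, since the statement fails without it.

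The missing idea, which is the actual content of the paper's deduction, is a different choice of the sets $A^{(i)}_\sigma$ that makes the intersection conclusion do real work: when $C_\sigma\cap X_i=\{x\}$ one takes $A^{(i)}_\sigma = P\cap H^+(x)$ (this is where $0\in\conv X_i$ enters, to verify that these halfspaces cover $P$), and when $C_\sigma\cap X_i=\emptyset$ one takes the fallback $A^{(i)}_\sigma=\sigma$ with $y^{(i)}_\sigma$ an arbitrary point of $\sigma$. Assuming the theorem false, one perturbs to general position so that no $k$ of the chosen points have $0$ in their convex hull, and then argues that any point $x_0\neq 0$ of $\bigcap_i A^{(i)}_{\sigma_i}$ would force all $y^{(i)}_{\sigma_i}$ into $H^+(x_0)$, hence $0$ into the convex hull of at most $k$ of them lying on the hyperplane $H(x_0)$ --- a contradiction. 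Thus $\bigcap_i A^{(i)}_{\sigma_i}=\{0\}$, and since $0$ lies in no proper face, none of the $A^{(i)}_{\sigma_i}$ can be a fallback set $\sigma_i$; this is what forces every $y^{(i)}_{\sigma_i}$ to be a genuine point of $X_i$. Without some mechanism of this kind that couples the intersection conclusion to the choice of the points $y^{(i)}_\sigma$, the specialization you describe does not go through.
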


\begin{proof}
We will assume that $0$ is not contained in any of the sets $X_1,...,X_{k+1}$, for otherwise we are done.
Let $P \subset \R^k$ be a polytope containing $0$ in its interior, such that if points $x$ and $y$ belong to the same face of $P$ then ${\langle x,y \rangle \ge 0}$.
For example, a sufficiently fine subdivision of any polytope that contains 0 in its interior (slightly perturbed to be a strictly convex polytope) satisfies this condition.
We can assume that any ray emanating from the origin intersects each $X_i$ in at most one point by arbitrarily deleting 
any additional points from~$X_i$. This will not affect the property that~${0 \in \conv X_i}$. Furthermore, we can choose $P$ in such a way that 
for each face $\sigma$ and $i \in [k+1]$ the intersection $C_\sigma \cap X_i$ contains at most one point.

For every $i\in [k+1]$ let $y_P^{(i)} = 0$ and $A_P^{(i)} = \emptyset$. Now for each nonempty, proper 
face $\sigma$ of $P$ choose points $y_{\sigma}^{(i)}$ and sets $A_{\sigma}^{(i)}$ in the following way: 
If there exists $x \in C_\sigma \cap X_i$ then let $y_\sigma^{(i)} = x$ and 
$A_\sigma^{(i)} = \{y \in P \: : \: \langle y, x \rangle \ge 0\} = P\cap H^+(x)$; otherwise let $y_\sigma^{(i)}$ be some point in $\sigma$ 
and let~${A_\sigma^{(i)} = \sigma}$. 

Suppose the statement of the theorem was incorrect; then in particular, we can slightly perturb the vertices of $P$ and those points $y_{\sigma}^{(i)}$
that were chosen arbitrarily in~$\sigma$, to make sure that for any collection of points $y_{\sigma_1}^{(1)}, \dots, y_{\sigma_{k+1}}^{(k+1)}$
and any subset $S$ of this collection of size at most $k$, $0\notin \conv S$. 

Let us now check that with these definitions the conditions of Theorem~\ref{thm:col-komiya} hold. Clearly, all the sets $A_\sigma^{(i)}$ are closed. 
The fact that  
$P$ is covered by the sets $A_\sigma^{(i)}$ for every fixed~$i$ follows from the condition $0 \in \conv X_i$. Indeed, this condition implies that for every $p \in P$ there exists a point $x \in X_i$ with $\langle p, x \rangle \ge 0$, and therefore, for the face $\sigma$ of $P$
for which ~${x \in C_\sigma}$ we have ${p \in A_{\sigma}^{(i)}}$.

Now fix a proper face $\sigma$ of~$P$. We claim that $\sigma \subset A_\sigma^{(i)}$ for every~$i$. Indeed, either
$X_i \cap C_\sigma = \emptyset$ in which case $A_\sigma^{(i)} = \sigma$, or otherwise, 
pick $x \in X_i \cap C_\sigma$ and let $\lambda > 0$ such that $\lambda x \in \sigma$; then for every $p \in \sigma$
we have $\langle p, \lambda x \rangle\ge 0$ by our assumption on~$P$, and thus $\langle p, x \rangle \ge 0$, 
or equivalently~${p \in A_\sigma^{(i)}}$. 

Thus by Theorem~\ref{thm:col-komiya} there exist faces $\sigma_1, \dots, \sigma_{k+1}$ of $P$ 
such that $\bigcap_{i=1}^{k+1} A_{\sigma_i}^{(i)} \ne \emptyset$ and $0 \in \conv\{y_{\sigma_1}^{(1)}, \dots, y_{\sigma_{k+1}}^{(k+1)}\}$. 
We claim that $\bigcap_{i=1}^{k+1} A_{\sigma_i}^{(i)}$ can contain only the origin. Indeed, suppose that $0\neq x_0 \in \bigcap_{i=1}^{k+1} A_{\sigma_i}^{(i)}$. Fix $i\in [k+1]$. If $y_{\sigma_i}^{(i)}\in C_{\sigma_i} \cap X_i$, then since $x_0 \in  A_{\sigma_i}^{(i)}$ we have $y_{\sigma_i}^{(i)} \in H^+(x_0)$ by definition.   
Otherwise $x_0 \in A_{\sigma_i}^{(i)} = \sigma_i$ and $y_{\sigma_i}^{(i)}\in \sigma_i$, so by our choice of $P$ we obtain again that $y_{\sigma_i}^{(i)} \in H^+(x_0)$.     
Thus all the points $y_{\sigma_1}^{(1)}, \dots, y_{\sigma_{k+1}}^{(k+1)}$ are in $H^+(x_0)$. But
since $0 \in \conv\{y_{\sigma_1}^{(1)}, \dots, y_{\sigma_{k+1}}^{(k+1)}\}$ this implies that the convex hull of the points in 
$\{y_{\sigma_1}^{(1)}, \dots, y_{\sigma_{k+1}}^{(k+1)}\} \cap H(x_0)$ contains the origin. Now, the dimension of $H(x_0)$ is $k-1$, and thus by Carath\'eodory's theorem there exists a set $S$ of 
at most $k$ of the points in $y_{\sigma_1}^{(1)}, \dots, y_{\sigma_{k+1}}^{(k+1)}$ with $0\in \conv S$, in contradiction to our general position assumption.

We have shown that $\bigcap_{i=1}^{k+1} A_{\sigma_i}^{(i)} = \{0\}$, and thus in particular, $A_{\sigma_i}^{(i)} \ne \sigma_i$ for all $i$. By our definitions, this implies $y_{\sigma_i}^{(i)} \in X_i$ for all $i$, concluding the proof of the theorem.  
\end{proof} 

\begin{remark}
Note that we could have avoided the usage of Carath\'eodory's theorem in the proof of Theorem \ref{thm:col-car} by taking a more restrictive assumption on the polytope $P$, namely, that ${\langle x,y \rangle > 0}$ whenever the points $x$ and $y$ belong to the same face of $P$. Therefore, in particular, Theorem \ref{thm:col-car} specializes to Carath\'eodory's theorem in the case where all the sets $X_i$ are the same.  
\end{remark}

\section{A colorful $d$-interval theorem}
\label{sec:interval}

Recall that 
a {\em fractional matching} in a hypergraph $H=(V,E)$ is a function $f\colon E \longrightarrow \R_{\ge 0}$ satisfying $\sum_{e:~ e\ni v} f(e)\le 1$ 
for all~${v\in V}$. A {\em fractional cover} is a function $g\colon V \longrightarrow \R_{\ge 0}$ satisfying $\sum_{v:~ v \in e} g(v)\ge 1$ 
for all~${e\in E}$. The \emph{fractional matching number} $\nu^*(H)$ is the maximum of $\sum_{e\in E} f(e)$ over all fractional matchings $f$ of $H$, and
the \emph{fractional covering number} $\tau^*(H)$ is the minimum of $\sum_{v\in V} g(v)$ over all fractional covers $g$. By linear programming duality, $\nu\le \nu^*=\tau^*\le \tau$.
A {\em perfect fractional matching} in $H$ is a fractional matching $f$ in which $\sum_{e:v\in e} f(e) = 1$ for every $v\in V$. 
It is a simple observation that a collection of sets $\I \subset 2^{[k+1]}$  is balanced if and only if the hypergraph $H=([k+1], \I)$ has a perfect fractional 
matching (see e.g., \cite{AKZ}).
The {\em rank} of a hypergraph $H=(V,E)$ is the maximal size of an edge in $H$.  $H$ is $d$-partite if there exists a partition $V_1,\dots, V_d$ of $V$ such that $|e\cap V_i| =1$ for every $e\in E$ and $i\in [d]$. 

For the proof of Theorem \ref{coloreddintervals} we will use the following theorem by F\"uredi.
\begin{theorem}[F\"uredi \cite{furedi}]\label{furedi}
If $H$ is a hypergraph of rank $d\ge 2$, then
$\nu(H) \ge \frac{\nu^*(H)}{d-1+\frac{1}{d}}.$ If, in addition, $H$ is $d$-partite, then $\nu(H) \ge \frac{\nu^*(H)}{d-1}.$  
\end{theorem}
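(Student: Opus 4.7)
The plan is to analyze an extremal optimal fractional matching and round it component by component on the support hypergraph, where the per-component loss is controlled by the rank~$d$.

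First, I would fix an optimal fractional matching $f^*\colon E\to\R_{\ge 0}$ that is a vertex of the fractional matching polytope $P(H)=\{f\ge 0 : \sum_{e\ni v} f(e)\le 1 \text{ for all } v\in V\}$, with $\sum_{e\in E} f^*(e)=\nu^*(H)$. Standard LP theory at a vertex of $P(H)$ gives $|\supp(f^*)|\le |V|$ and forces the values $f^*(e)$ to be rationals with denominator at most~$d$. Let $H^*=(V(\supp f^*),\,\supp f^*)$. Its connected components split into \emph{integral} components, consisting of a single edge $e$ with $f^*(e)=1$ (which contribute equally to $\nu$ and $\nu^*$), and \emph{fractional} components $C$, in which every $v\in V(C)$ is saturated, that is, $\sum_{e\in C,\, e\ni v} f^*(e)=1$.

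Next, I would show that on each fractional component $C$ there is an integer matching of size at least $\sum_{e\in C} f^*(e)/(d-1+1/d)$. The governing extremal example is the truncated projective plane of order $d-1$: $d^2-d+1$ points, $d^2-d+1$ blocks, $\nu=1$, and $\nu^*=(d^2-d+1)/d=d-1+1/d$. I would argue that the rigidity of being a vertex of $P(H)$ restricts each fractional component to a configuration no worse than this example, and exhibit the required matching by a direct rounding (choosing a maximal family of pairwise disjoint edges guided by $f^*$) or by induction on $|V(C)|$. Combining the per-component matchings with the integral components and summing yields $\nu(H)\ge \nu^*(H)/(d-1+1/d)$.

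For the $d$-partite improvement, the partition $V=V_1\sqcup\cdots\sqcup V_d$ prevents the ``odd'' hypergraph structures that make the projective-plane example the worst case. Concretely, for each fractional component $C$ I would pass to a bipartite auxiliary graph between a single part $V_i\cap V(C)$ and $\bigcup_{j\ne i}V_j\cap V(C)$, and invoke König's theorem (via integrality of the bipartite fractional matching polytope) to produce an integer matching in $C$ of size at least $\sum_{e\in C} f^*(e)/(d-1)$, giving $\nu(H)\ge \nu^*(H)/(d-1)$.

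The main obstacle is the structural lemma underlying the second paragraph: one must show that the support of an extreme optimum fractional matching, restricted to any fractional component, cannot beat the projective-plane ratio $d-1+1/d$. The natural route is a double-counting or discharging argument on the tight vertex constraints of $C$: since every $v\in V(C)$ is saturated and $f^*$ is a vertex of $P(H)$, the incidence matrix of $C$ must be a square nonsingular $0/1$ matrix whose combinatorics force $|V(C)|\le (d-1+1/d)\,\nu(C)\cdot d$. Once this local-to-global inequality is in place, both bounds follow by summing over components, and the $d$-partite strengthening is a direct consequence of removing the $+1/d$ slack via König.
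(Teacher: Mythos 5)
The paper does not prove this statement; it is quoted as a known theorem of F\"uredi \cite{furedi}, whose original proof occupies several pages. Your proposal sketches a plausible-sounding strategy but leaves the entire mathematical content unproved, and several of its intermediate claims are false.

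The central gap is the ``structural lemma'' you defer to the last paragraph. Decomposing the support of an extreme optimal fractional matching into \emph{connected components} achieves no reduction: the hypergraph may well be connected with $\nu$ large, in which case there is a single fractional component and the claim ``$\nu(C)\ge \nu^*(C)/(d-1+\frac1d)$ for each component $C$'' is literally the theorem you set out to prove. The assertion that ``the rigidity of being a vertex of $P(H)$ restricts each fractional component to a configuration no worse than the truncated projective plane'' is exactly the hard part, and no argument is offered; the fact that the tight-constraint incidence matrix of a basic solution is square and nonsingular does not yield $|V(C)|\le (d-1+\frac1d)\,d\,\nu(C)$ in any visible way (and even granting that inequality, converting $|V(C)|$ into an upper bound on $\nu^*(C)$ requires all edges to have size exactly $d$ and all vertices to be saturated, neither of which you may assume). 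F\"uredi's actual reduction is not to connected components but to \emph{intersecting} subfamilies: one shows (following Lov\'asz) that some optimal fractional matching has a support partitionable into at most $\nu(H)$ intersecting partial hypergraphs, and separately that an intersecting hypergraph of rank $d$ has $\nu^*\le d-1+\frac1d$ (respectively $\le d-1$ when $d$-partite). Both of those steps are nontrivial and absent from your sketch.

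The $d$-partite improvement is also not established by your argument. Collapsing $\bigcup_{j\ne i}V_j$ to one side and invoking K\H{o}nig's theorem does not work: a matching in that auxiliary bipartite graph does not correspond to a matching of the hypergraph, and K\H{o}nig only handles $d=2$. The bound $\nu^*\le(d-1)\nu$ for $d$-partite hypergraphs is the fractional relaxation of Ryser's conjecture and again goes through the intersecting case by a separate argument. Finally, two of your preliminary claims are false, though not load-bearing: vertices of the fractional matching polytope of a rank-$d$ hypergraph need not have denominators bounded by $d$, and a fractional component of an optimal basic solution need not have all its vertices saturated (three $3$-edges pairwise meeting in three distinct vertices, each of weight $\tfrac12$, give an optimal vertex of the polytope with three unsaturated vertices). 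As it stands the proposal is an outline of where a proof would have to go, not a proof.
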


We will also need the following simple counting argument. 
\begin{lemma}\label{rank}
If a hypergraph $H=(V,E)$ of rank $d$ has a perfect fractional matching, then $\nu^*(H)\ge \frac{|V|}{d}$.  
\end{lemma}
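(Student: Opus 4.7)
The plan is a direct double-counting argument applied to the given perfect fractional matching. Let $f\colon E \longrightarrow \R_{\ge 0}$ be a perfect fractional matching of $H$, so that $\sum_{e \ni v} f(e) = 1$ for every $v \in V$. Since $\nu^*(H)$ is defined as the supremum of $\sum_{e \in E} f(e)$ over all fractional matchings, it suffices to show that this particular $f$ satisfies $\sum_{e \in E} f(e) \ge |V|/d$.

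Summing the perfect matching identity over all vertices yields
\[
|V| \;=\; \sum_{v \in V}\sum_{e \ni v} f(e) \;=\; \sum_{e \in E} |e|\, f(e),
\]
after interchanging the order of summation. The rank hypothesis gives $|e| \le d$ for every edge $e \in E$, so the right-hand side is at most $d \sum_{e \in E} f(e)$. Dividing by $d$ and using $\nu^*(H) \ge \sum_{e \in E} f(e)$ completes the proof.

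There is no real obstacle here: the argument is a one-line double count, and the only point to verify carefully is that the rank bound applies termwise in the sum (which is immediate since $f(e) \ge 0$). I would present it in the paper as three short lines of display math without any auxiliary lemmas.
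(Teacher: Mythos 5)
Your proof is correct and is exactly the paper's argument: double-count $\sum_{v}\sum_{e \ni v} f(e) = |V|$, note each $f(e)$ appears $|e| \le d$ times, and conclude $\nu^*(H) \ge \sum_e f(e) \ge |V|/d$. No differences worth noting.
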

\begin{proof}
Let $f\colon E\longrightarrow \mathbb{R}_{\ge 0}$ be a perfect fractional matching of $H$. Then 
%\begin{equation}\label{eq1}
$\sum_{v\in V}\sum_{e: v\in e} f(e) = \sum_{v\in V} 1 = |V|.$
%\end{equation} 
Since $f(e)$ was counted $|e|\le d$ times in this equation for every edge $e\in E$, we have that 
$\nu^*(H)\ge \sum_{e\in E} f(e) \ge \frac{|V|}{d}.$   
\end{proof}  
%\medskip

We are now ready to prove Theorem~\ref{coloreddintervals}. The proof is an adaption of the methods in~\cite{AKZ}.
For the first part we need the simplex version of Theorem~\ref{thm:col-komiya}, which was already proven by Shih and Lee~\cite{ShihLee},
while the second part requires our more general polytopal extension.

\begin{proof}[Proof of Theorem~\ref{coloreddintervals}.]
For a point 
$\vec{x}=(x_1,\dots,x_{k+1}) \in \Delta_k$ let  $p_{\vec{x}}({j})=\sum_{t=1}^j x_t  \in [0,1]$.
Since $\F$ is finite, by rescaling $\R$ we may assume that 
$\F \subset (0,1)$. 
For every $T\subset [k+1]$ let $A^i_T$ be the set consisting of all $\vec{x} \in
\Delta_k$ for which there exists a $d$-interval $f \in \F_i $ satisfying: 
\begin{enumerate}
\item[(a)] $f\subset \bigcup_{j\in T} (p_{\vec{x}}({j-1}),p_{\vec{x}}({j}))$, and 
\item[(b)] $f \cap
(p_{\vec{x}}({j-1}),p_{\vec{x}}({j})) \neq \emptyset$ for each $j
\in T$.
\end{enumerate}  
Note that $A^i_T =
\emptyset$ whenever $|T| > d$.

Clearly, the sets $A^i_{T}$ are open. The assumption $\tau(\F_i )>k$ implies that for every $\vec{x}=(x_1,\dots,x_{k+1}) \in \Delta_k$, the set $P(\vec{x})=\{p_{\vec{x}}({j}) \::\: j\in [k]\}$ is not a cover of $\F_i $,  
meaning that there exists $f\in \F_i $ not
containing any $p_{\vec{x}}(j)$. This, in turn, means that $\vec{x}
\in A^i_{T}$ for some $T \subseteq [k+1]$, and thus the sets $A^i_{T}~$ form a cover of $\Delta_k$ for every $i\in[k+1]$.
 
To show that this is a KKMS cover, let $\Delta^S$ be a face of $\Delta_k$ for some $S\subset [k+1]$. If $\vec{x}\in \Delta^S$ then
$(p_{\vec{x}}({j-1}),p_{\vec{x}}({j}))=\emptyset$ for $j \notin S$,
and hence it is impossible to have ${f \cap (p_{\vec{x}}({j-1}),p_{\vec{x}}({j}))\neq \emptyset}$. 
Thus  $\vec{x} \in A^i_{T}$
for some $T \subseteq S$. This proves that $\Delta^S \subseteq \bigcup_{T
\subseteq S}A^i_{T}$ for all $i\in [k+1]$. 

By Theorem~\ref{thm:col-komiya}
 there exists 
  a balanced collection of sets  $\T=\{T_1,\dots,T_{k+1}\}$ of subsets of $[k+1]$, satisfying
 $\bigcap_{i=1}^{k+1} A^i_{T_i}\neq \emptyset$. In particular,
$|T_i|\le d$ for all $i$. (Recall that we think of a collection of sets $\mathcal I \subset 2^{[k+1]}$ as faces of the $k$-dimensional simplex to apply the earlier geometric definition of balancedness.) Then by the observation mentioned above, the hypergraph $H=([k+1],\T)$ of rank $d$ has a perfect fractional matching, and thus by Lemma \ref{rank} we have $\nu^*(H) \ge
\frac{k+1}{d}$. Therefore, by Theorem \ref{furedi}, $\nu(H) \ge
\frac{\nu^*(H)}{d-1+\frac{1}{d}}\ge \frac{k+1}{d^2-d+1}.$

Let $M$ be a matching in $H$ of size  $m \ge \frac{k+1}{d^2-d+1}$.
Let $\vec{x} \in \bigcap_{i=1}^{k+1} A^i_{T_i}$. For every $i\in [k+1]$ let $f(T_i)$ be the $d$-interval of $\F_i $ witnessing the fact that
$\vec{x} \in A^i_{T_i}$. Then the set $\M=\{f(T_i)\mid T_i \in M\}$ is a matching of
size $m$ in $\F$ with $|\M\cap \F_i | \le 1$. This proves the first assertion of the theorem.
%%%%%%%%%%%%%%%%%

Now suppose that $\F_i $ is a hypergraph of separated $d$-intervals for all $i\in [kd+1]$. 
For $f \in \F$ let $f^t \subset (t-1,t)$ be the $t$-th interval component of~$f$.
We can assume without loss of generality that $f^t$ is nonempty.  
Let $P=(\Delta_k)^d$. For a $d$-tuple $T=(j_i,\dots,j_d) \subset [k+1]^d$ let $A^i_T$ consist of all $\vec{X}=\vec{x}^1\times\cdots\times\vec{x}^d \in
P$ for which there exists $f \in \F_i $ satisfying 
 $f^t\subset (t-1+p_{\vec{x}^t}({j_t-1}),t-1+p_{\vec{x}^t}({j_t}))$ for all $t\in[d]$.

Since $\tau(\F_{i})>kd$, the points $t-1+p_{\vec{x}^t}(j), t\in[d], j\in [k],$ do not form a cover of $\F_{i}$. Therefore,  
by the same argument as before, the sets $A^i_{T}$ are open and satisfy the covering condition of Theorem \ref{thm:col-komiya}. Thus, 
by Theorem \ref{thm:col-komiya},
 there exists 
  a set  $\T=\{T_1,\dots,T_{kd+1}\}$  of $d$-tuples in $[k+1]^d$ containing the point $(\frac{1}{k+1},\dots,\frac{1}{k+1})\times\dots \times(\frac{1}{k+1},\dots,\frac{1}{k+1})\in P$ in its convex hull and satisfying $\bigcap_{i\in [kd+1]} A^i_{T_i} \neq \emptyset$. Then the $d$-partite hypergraph  $H=(\bigcup_{i=1}^d V_i,\T)$, where $V_i=[k+1]$ for all $i$, has a  perfect fractional matching, and hence by Lemma \ref{rank} we have $\nu^*(H) \ge k+1$. By Theorem \ref{furedi}, this implies $\nu(H) \ge
\frac{\nu^*(H)}{d-1}\ge \frac{k+1}{d-1}
.$ Now, by the same argument as before, by taking $\vec{X} \in \bigcap_{i\in [kd+1]} A^i_{T_i}$ we obtain a matching in $\F$ of the same size as a maximal matching in $H$, concluding the proof of the theorem.
\end{proof}

\section*{Acknowledgment}
We wish to thank Erel Segal-Halevi for pointing out a mistake in the statement of Theorem 1.3(2) in a previous version of this manuscript. 
This material is based upon work supported by the National Science Foundation under Grant No. DMS-1440140 while the authors were  in residence at the Mathematical Sciences Research Institute in Berkeley, California, during the Fall 2017 semester.

\bibliographystyle{amsplain}

\begin{thebibliography}{10}

\bibitem{AKZ}
R. Aharoni, T. Kaiser, and S. Zerbib, \emph{Fractional covers
  and matchings in families of weighted {$d$}-intervals}, Combinatorica
  \textbf{37} (2017), no.~4, 555--572.

\bibitem{asada2017}
M. Asada, F. Frick, V. Pisharody, M. Polevy, D. Stoner,
  L. Tsang, and Z. Wellner, \emph{{Fair division and generalizations of
  Sperner- and KKM-type results}}, SIAM J. Discrete Math. 32(1), 591--610 (2018).
  
\bibitem{babson2012}
E. Babson, \emph{Meunier conjecture}, arXiv preprint arXiv:1209.0102 (2012).

\bibitem{bapat1989}
R.~B. Bapat, \emph{{A constructive proof of a permutation-based generalization
  of Sperner's lemma}}, Math. Program. \textbf{44} (1989), no.~1-3, 113--120.

\bibitem{barany}
I. B{\'a}r{\'a}ny, \emph{{A generalization of Carath{\'e}odory's theorem}},
  Discrete Math. \textbf{40} (1982), no.~2-3, 141--152.

\bibitem{deLoera2002}
J.~A. De~Loera, E. Peterson, and F.~E. Su, \emph{{A polytopal
  generalization of Sperner's lemma}}, J. Combin. Theory, Ser.~A \textbf{100}
  (2002), no.~1, 1--26.

\bibitem{frick2017}
F. Frick, K. Houston-Edwards, and F. Meunier,
  \emph{Achieving rental harmony with a secretive roommate}, Amer. Math. Monthly, to appear.

\bibitem{furedi} 
Z. F{\"u}redi, \emph{Maximum degree and fractional matchings in uniform
  hypergraphs}, Combinatorica \textbf{1} (1981), no.~2, 155--162.

\bibitem{gale1984}
D. Gale, \emph{Equilibrium in a discrete exchange economy with money}, Int.
  J. Game Theory \textbf{13} (1984), no.~1, 61--64.

\bibitem{kaiser}
T. Kaiser, \emph{Transversals of {$d$}-intervals}, Discrete Comput.
  Geom. \textbf{18} (1997), no.~2, 195--203.

\bibitem{KKM}
B. Knaster, C. Kuratowski, and S. Mazurkiewicz, \emph{{Ein
  Beweis des Fixpunktsatzes f{\"u}r n-dimensionale Simplexe}}, Fund. Math.
  \textbf{14} (1929), no.~1, 132--137.

\bibitem{komiya}
H. Komiya, \emph{{A simple proof of KKMS theorem}}, Econ. Theory
  \textbf{4} (1994), no.~3, 463--466.

\bibitem{matousek}
J. Matou{\v{s}}ek, \emph{Lower bounds on the transversal numbers of
  {$d$}-intervals}, Discrete Comput. Geom. \textbf{26} (2001), no.~3, 283--287.

\bibitem{meunier2017}
F. Meunier, W. Mulzer, P. Sarrabezolles, and Y.
  Stein, \emph{{The rainbow at the end of the line: a PPAD formulation of the
  colorful Carath{\'e}odory theorem with applications}}, Proceedings of the
  Twenty-Eighth Annual ACM-SIAM Symposium on Discrete Algorithms, Society for
  Industrial and Applied Mathematics, 2017, pp.~1342--1351.

\bibitem{musin2017}
O.~R. Musin, \emph{{KKM type theorems with boundary conditions}}, J. Fixed
  Point Theory Appl. \textbf{19} (2017), no.~3, 2037--2049.

\bibitem{shapley}
L.~S. Shapley, \emph{On balanced games without side payments}, Math.
  Program., Math. Res. Center Publ. (T.~C. Hu and S.~M. Robinson, eds.),
  vol.~30, Academic Press, New York, 1973, pp.~261--290.

\bibitem{ShihLee}
M. Shih and S. Lee, \emph{Combinatorial formulae for multiple
  set-valued labellings}, Math. Ann. \textbf{296} (1993), no.~1, 35--61.

\bibitem{su1999}
F.~E. Su, \emph{Rental harmony: Sperner's lemma in fair division}, Amer.
  Math. Monthly \textbf{106} (1999), no.~10, 930--942.

\bibitem{tardos}
G. Tardos, \emph{Transversals of {$2$}-intervals, a topological
  approach}, Combinatorica \textbf{15} (1995), no.~1, 123--134.

\end{thebibliography}

\end{document}